\newtheorem{theorem}{Theorem}[section]
\newtheorem{corollary}[theorem]{Corollary}
\newtheorem{lemma}[theorem]{Lemma}
\newtheorem{proposition}[theorem]{Proposition}
\newtheorem{remark}{Remark}[section]
\numberwithin{equation}{section}
\begin{document}
\title[Lifespan of solutions for NLS]{Lifespan of solutions for the
nonlinear Schr\"{o}dinger equation without gauge invariance}
\author[M. Ikeda ]{Masahiro Ikeda}
\subjclass[2000]{35Q55}

\begin{abstract}
We study the lifespan of solutions for the nonlinear Schr\"{o}dinger
equation 
\begin{equation}
i\partial _{t}u+\Delta u=\lambda |u|^{p},\quad \left( t,x\right) \in \left[
0,T\right) \times \mathbb{R}^{n},  \tag{NLS}  \label{0.1}
\end{equation}%
with the initial condition, where $1<p\leq 1+2/n$ and $\lambda \in \mathbb{C}%
\setminus \{0\}.$ Our main aim in this paper is to prove an upper bound of
the lifespan in the subcritical case $1<p<1+2/n.$
\end{abstract}

\maketitle

\section{\label{S1} Introduction}

In this paper, we study the initial value problem for the nonlinear Schr\"{o}%
dinger equation (NLS) with a non-gauge invariant power nonlinearity: 
\begin{equation}
i\partial _{t}u+\Delta u=\lambda \left\vert u\right\vert ^{p},\quad \left(
t,x\right) \in \left[ 0,T\right) \mathbf{\times }\mathbb{R}^{n},
\label{eq11}
\end{equation}%
with the initial condition 
\begin{equation}
u\left( 0,x\right) =\varepsilon f\left( x\right) ,\text{ \ }x\in \mathbb{R}%
^{n},  \label{eq12}
\end{equation}%
where $T>0,$ $1<p\leq 1+2/n,$ $u$ is a complex-valued unknown function of $%
\left( t,x\right) ,$ $\lambda \in \mathbb{C}\backslash \left\{ 0\right\} ,$ $%
f$ is a given complex-valued function, $\varepsilon >0$ is a small parameter.

It is well known that local well-posedness holds for (\ref{eq11})-(\ref{eq12}%
) in several Sobolev spaces $H^{s}$ $\left( s\geq 0\right) $ (see e.g. \cite%
{Caze03, Tsu87} and the references therein). However, there had been no
results about global existence of solutions for (\ref{eq11})-(\ref{eq12}) in
the case of $1<p\leq 1+2/n.$ It is also well known that when $p\geq p_{s},$
where $p_{s}$ is the Strauss exponent (see \cite{Strauss81}),
\textquotedblleft small data global existence result\textquotedblright\
holds (see also \cite{Caze03}). Recently, in paper \cite{MIYWpre}, blow-up
solutions for (\ref{eq11})-(\ref{eq12}) were constructed in the case of $%
1<p\leq 1+2/n$ under a suitable initial data. To construct blow-up
solutions, they have to choose the shape of the initial data, though the
size of the data may be small. But since they used a contradiction argument
to construct a blow-up solution, the mechanism of the blow-up solution (e.g.
estimate of the lifespan, blow-up speed etc.) has not been known. Motivated
by their result, we decided to consider the lifespan of the local solution
for (\ref{eq11})-(\ref{eq12}). Especially, our main aim of the present paper
is to give an upper bound of the lifespan in the subcritical case $1<p<1+2/n.
$ We note that the optimality of the lifespan is still open. And, in the
critical case $p=1+2/n,$ an upper bound of the lifespan is also still not
known. We also remark that it is open what happens in the case of $%
1+2/n<p\leq p_{s}.$ (For more recent information of blow-up results of NLS,
see e.g. \cite{Oh12}, \cite{OzaSnaPre}, \cite{PRa} and the references
therein.)

\section{\label{S2} Known Results and Main Result}

First, we recall the local existence result for the integral equation in $%
L^{2}$-framework:%
\begin{equation}
u\left( t\right) =\varepsilon U\left( t\right) f-i\lambda
\int_{0}^{t}U\left( t-s\right) \left\vert u\right\vert ^{p}ds,  \label{eq21}
\end{equation}%
which is associated with (\ref{eq11})-(\ref{eq12}), where $U(t)=\exp
(it\Delta )$ is the free evolution group to the Schr\"{o}dinger equation.

\begin{proposition}[Tsutsumi \protect\cite{Tsu87}]
\label{Prop21} Let $1<p<1+4/n,\lambda \in \mathbb{C},$ $\varepsilon \geq 0$
and $f\in L^{2}.$ Then there exist a positive time $T=T\left( \left\Vert
f\right\Vert _{L^{2}},\varepsilon \right) >0$ and a unique solution $u\in
C\left( \left[ 0,T\right) ;L^{2}\right) \cap L_{t}^{r}\left( 0,T;L_{x}^{\rho
}\right) $ of \textrm{(\ref{eq21})}, where $r,\rho $ are defined by $\rho
=p+1$ and $2/r=n/2-n/\rho .$
\end{proposition}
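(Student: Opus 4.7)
The plan is to set up a contraction mapping argument on a closed ball of $C([0,T];L^2) \cap L^r(0,T;L^\rho)$, driven by the Strichartz estimates for the free propagator $U(t)$. The first step is to verify that the pair $(r,\rho)$ with $\rho = p+1$ and $2/r = n/2 - n/\rho$ is Schr\"odinger-admissible: the constraint $1 < p < 1+4/n$ is equivalent to $\rho < 2+4/n$, which forces $r > p+1 > 2$, so both the homogeneous and inhomogeneous Strichartz estimates
\begin{equation*}
\|U(t)f\|_{L^r_t L^\rho_x} \lesssim \|f\|_{L^2}, \qquad \Bigl\|\int_0^t U(t-s)F(s)\,ds\Bigr\|_{L^\infty_t L^2_x \cap L^r_t L^\rho_x} \lesssim \|F\|_{L^{r'}_t L^{\rho'}_x}
\end{equation*}
will be available on the dual admissible pair $(r',\rho')$.

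I would then introduce the solution map $\Phi(u)(t) := \varepsilon U(t)f - i\lambda \int_0^t U(t-s)|u(s)|^p\,ds$ and the complete metric space
\begin{equation*}
X_{M,T} := \bigl\{u \in C([0,T];L^2) \cap L^r(0,T;L^\rho) \;:\; \|u\|_{L^\infty_T L^2} + \|u\|_{L^r_T L^\rho_x} \le M\bigr\},
\end{equation*}
equipped with the distance inherited from this norm. To control the nonlinear contribution, I would apply H\"older in $x$ first, noting that $p\rho' = \rho$ since $\rho = p+1$, so $\||u|^p(t)\|_{L^{\rho'}_x} = \|u(t)\|_{L^\rho}^p$; then H\"older in $t$ on $[0,T]$, whose exponents satisfy $pr' < r$ precisely under the strict inequality $p < 1+4/n$, producing a positive power $T^\theta$ with $\theta := 1/r' - p/r > 0$.

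Combining the two H\"older steps with the Strichartz bounds yields
\begin{equation*}
\|\Phi(u)\|_{L^\infty_T L^2 \cap L^r_T L^\rho_x} \le C\varepsilon \|f\|_{L^2} + C|\lambda| T^\theta M^p,
\end{equation*}
and the pointwise inequality $\bigl||u|^p - |v|^p\bigr| \lesssim (|u|^{p-1}+|v|^{p-1})|u-v|$ combined with H\"older provides a matching Lipschitz estimate $\|\Phi(u)-\Phi(v)\| \le C|\lambda| T^\theta M^{p-1} \|u-v\|$.

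Finally, I would choose $M := 2C\varepsilon\|f\|_{L^2}$ and then $T = T(\|f\|_{L^2},\varepsilon) > 0$ so small that $C|\lambda| T^\theta M^{p-1} \le 1/2$; this makes $\Phi$ a contraction on $X_{M,T}$, and its unique fixed point is the desired solution of \eqref{eq21}, with the $C_t L^2$-continuity obtained from the strong continuity of $U(t)$ and dominated convergence applied to the Duhamel integral. The only genuinely delicate point is the H\"older-in-time gain, since $\theta$ would vanish exactly at $p = 1+4/n$; this is the sole place where the strict subcritical hypothesis is used in an essential way, and the endpoint would require a different (e.g.\ endpoint-Strichartz or Kato-type) argument.
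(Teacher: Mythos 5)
The paper does not prove this proposition at all --- it is quoted as a known result of Tsutsumi \cite{Tsu87}, whose original argument is precisely the Strichartz-plus-contraction scheme you describe, so your proposal follows essentially the same (standard) route. Your exponent bookkeeping is correct: $(r,\rho)$ is admissible since $\rho=p+1<2+4/n\leq 2n/(n-2)$, the identity $p\rho'=\rho$ makes the spatial H\"older step exact, and $\theta=1-(p+1)/r=1-n(p-1)/4>0$ exactly under $p<1+4/n$. The only point worth tightening is uniqueness: the fixed-point argument gives uniqueness only in the ball $X_{M,T}$, whereas the proposition asserts it in all of $C([0,T);L^{2})\cap L^{r}_{t}(0,T;L^{\rho}_{x})$; this is recovered by the usual step of comparing two solutions on a sufficiently small subinterval (where both have small $L^{r}L^{\rho}$ norm) and iterating, which you should state explicitly.
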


The above solution $u$ is called \textquotedblleft $L^{2}$%
-solution\textquotedblright . Our next concern is the estimate of the
lifespan. Let $T_{\varepsilon }$ be the maximal existence time (lifespan) of
the solution, that is,%
\begin{eqnarray*}
T_{\varepsilon } &\equiv &\sup \left\{ T\in \left( 0,\infty \right] ;\text{
there exists a unique solution }u\text{ to (\ref{eq21})}\right. \\
&&\left. \text{such that }u\in C\left( \left[ 0,T\right) ;L^{2}\right) \cap
L_{t}^{r}\left( 0,T;L_{x}^{\rho }\right) \right\} ,
\end{eqnarray*}%
where $r,\rho $ are as in Proposition \ref{Prop21}. The lower bound of the
lifespan follows from the proposition immediately.

\begin{corollary}
\bigskip \label{Cor 2.1} Under the same assumptions as in Proposition \ref%
{Prop21}, the estimate is valid%
\begin{equation*}
T_{\varepsilon }\geq C\varepsilon ^{1/\omega },
\end{equation*}%
where $\omega =n/4-1/\left( p-1\right) $ and $C=$ $C\left( n,p,\left\Vert
f\right\Vert _{L^{2}}\right) $ is a positive constant$.$
\end{corollary}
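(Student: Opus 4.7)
The plan is to observe that this lower bound is a direct by-product of the Strichartz / contraction proof of Proposition \ref{Prop21}, once one tracks the dependence of $T$ on $\varepsilon$. I would work in the complete metric space
\begin{equation*}
X_{T,R} = \bigl\{ u \in C([0,T]; L^2) \cap L^r_t([0,T]; L^\rho_x) : \|u\|_{L^\infty_t L^2_x} + \|u\|_{L^r_t L^\rho_x} \leq R \bigr\},
\end{equation*}
and apply a contraction mapping argument to $\Phi(u)(t) = \varepsilon U(t) f - i\lambda \int_0^t U(t-s) |u|^p\,ds$. The linear part is controlled by the Strichartz estimate as $\|\varepsilon U(\cdot) f\|_{L^\infty_t L^2_x \cap L^r_t L^\rho_x} \leq C \varepsilon \|f\|_{L^2}$.

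For the nonlinear term I would use the inhomogeneous Strichartz estimate with the same admissible pair on both sides, giving
\begin{equation*}
\Bigl\| \int_0^t U(t-s) |u|^p\,ds \Bigr\|_{L^\infty_t L^2_x \cap L^r_t L^\rho_x} \leq C \, \bigl\| |u|^p \bigr\|_{L^{r'}_t L^{\rho'}_x}.
\end{equation*}
Since $\rho = p+1$ gives $\rho' = (p+1)/p$, pointwise $\||u|^p\|_{L^{\rho'}_x} = \|u\|_{L^\rho_x}^p$, so the right-hand side equals $\|u\|_{L^{pr'}_t L^\rho_x}^p$. The assumption $p < 1+4/n$ is exactly what makes $pr' \leq r$, so Hölder in time on $[0,T]$ yields
\begin{equation*}
\bigl\| |u|^p \bigr\|_{L^{r'}_t L^{\rho'}_x} \leq T^{\theta} \|u\|_{L^r_t L^\rho_x}^p, \qquad \theta = \frac{r - 1 - p}{r} > 0.
\end{equation*}
A parallel estimate gives the corresponding Lipschitz bound for $\Phi(u) - \Phi(v)$.

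Combining these, $\|\Phi(u)\|_{X_T} \leq C_0 \varepsilon \|f\|_{L^2} + C_1 T^\theta R^p$ on the ball of radius $R$. Choosing $R = 2 C_0 \varepsilon \|f\|_{L^2}$ and demanding $C_1 T^\theta R^{p-1} \leq 1/2$ (together with the analogous condition for contraction) produces a unique fixed point, i.e.\ an $L^2$-solution on $[0,T]$, whenever
\begin{equation*}
T \leq c \, \bigl( \varepsilon \|f\|_{L^2} \bigr)^{-(p-1)/\theta}.
\end{equation*}
A direct computation with $r = 4(p+1)/(n(p-1))$ yields $r - 1 - p = (p+1)(4 - n(p-1))/(n(p-1))$ and $(p-1)r = 4(p+1)/n$, so
\begin{equation*}
\frac{p-1}{\theta} = \frac{(p-1) r}{r-1-p} = \frac{4(p-1)}{4 - n(p-1)} = -\frac{1}{\omega}.
\end{equation*}
Since $1/\omega < 0$ in the range $1 < p < 1+4/n$, the resulting admissible $T$ is of size $C \varepsilon^{1/\omega}$, which is exactly the claimed lower bound $T_\varepsilon \geq C \varepsilon^{1/\omega}$.

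There is no real obstacle here: this is a textbook contraction and the only bookkeeping is the verification that $pr' \leq r$ in the subcritical range and the algebraic identification of the exponent $(p-1)/\theta$ with $-1/\omega$; both are elementary. The conceptual content of the corollary is entirely that one reads off the explicit $\varepsilon$-dependence of the existence time provided by Proposition \ref{Prop21}.
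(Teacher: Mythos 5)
Your proposal is correct and is essentially the argument the paper intends: the paper offers no separate proof of Corollary \ref{Cor 2.1}, stating only that the lower bound ``follows from the proposition immediately,'' i.e.\ from tracking the $\varepsilon$-dependence of the existence time in the Strichartz--contraction proof of Proposition \ref{Prop21}, which is exactly what you carry out. Your bookkeeping checks out: with $r=4(p+1)/(n(p-1))$ one indeed has $\theta=(r-1-p)/r>0$ precisely when $p<1+4/n$, and $(p-1)/\theta=4(p-1)/(4-n(p-1))=-1/\omega$, yielding $T_{\varepsilon}\geq C\varepsilon^{1/\omega}$.
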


The next interest is an upper bound of the lifespan. In \cite{MIYWpre}, it
was proved that $T_{\varepsilon }$ must be finite for suitable initial data.
To recall their result, we introduce some notations: $\lambda _{1}=\func{Re}%
\lambda ,$ $\lambda _{2}=\func{Im}\lambda ,$ $f_{1}=\func{Re}f$ and $f_{2}=%
\func{Im}f.$

We impose the additional assumptions on the data:%
\begin{equation}
\text{\textquotedblleft }f_{1}\in L^{1},\ \lambda _{2}\int_{\mathbb{R}%
^{n}}f_{1}(x)dx>0\text{\textquotedblright }\ \text{or}\ \text{%
\textquotedblleft }f_{2}\in L^{1},\ \lambda _{1}\int_{\mathbb{R}%
^{n}}f_{2}\left( x\right) dx<0\text{\textquotedblright }.  \label{eq22}
\end{equation}%
Then the following is valid:

\begin{proposition}[Ikeda and Wakasugi \protect\cite{MIYWpre}]
\label{Prop22} Let $1<p\leq 1+2/n,$ $\lambda \in \mathbb{C}\setminus \left\{
0\right\} ,$ $\varepsilon >0$ and $f\in L^{2}.$ If $f$ satisfies (\ref{eq22}%
), then $T_{\varepsilon }<\infty .$ Moreover, the $L^{2}$-norm of the local
solution blows up in finite time;%
\begin{equation}
\lim_{t\rightarrow T_{\varepsilon }-0}\left\Vert u\left( t\right)
\right\Vert _{L^{2}}=\infty .  \label{eq23}
\end{equation}
\end{proposition}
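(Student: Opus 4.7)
My plan is to argue by contradiction using a test-function (duality) argument in the spirit of Mitidieri--Pohozaev, combined with the $L^{2}$ blow-up alternative that comes from Proposition~\ref{Prop21}. Writing $u=u_{1}+iu_{2}$, $\lambda=\lambda_{1}+i\lambda_{2}$, $f=f_{1}+if_{2}$ and separating real and imaginary parts in (\ref{eq11}) gives the real system
\[
\partial_{t}u_{1}+\Delta u_{2}=\lambda_{2}|u|^{p},\qquad -\partial_{t}u_{2}+\Delta u_{1}=\lambda_{1}|u|^{p},
\]
which is exactly what one wants to pair with the sign hypothesis (\ref{eq22}). I would introduce a nonnegative cut-off $\psi_{T}(t,x)=\eta(t/T)^{q}\phi(|x|^{2}/T)^{q}$, with $\eta,\phi\in C_{c}^{\infty}$ equal to $1$ near the origin and supported in $[0,1)$, and with $q$ large enough (say $q\ge 2p'$, where $p'=p/(p-1)$) that the weight $\psi_{T}^{-p'/p}|\nabla^{2}\psi_{T}|^{p'}$ remains integrable. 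The scales $t\sim T$, $|x|\sim\sqrt{T}$ match the Schr\"odinger operator, so $|\partial_{t}\psi_{T}|+|\Delta\psi_{T}|=\mathcal{O}(T^{-1})$ on the support.

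In the first branch of (\ref{eq22}) I would test the imaginary-part equation against $\psi_{T}$ on $[0,T]\times\mathbb{R}^{n}$; the boundary term at $t=T$ vanishes because $\eta(1)=0$, and integration by parts yields
\[
\lambda_{2}\iint|u|^{p}\psi_{T}\,dx\,dt+\varepsilon\int f_{1}\psi_{T}(0,\cdot)\,dx=-\iint u_{1}\partial_{t}\psi_{T}+\iint u_{2}\Delta\psi_{T}.
\]
Since $f_{1}\in L^{1}$, dominated convergence gives $\int f_{1}\psi_{T}(0,\cdot)\to\int f_{1}$, and the hypothesis $\lambda_{2}\int f_{1}>0$ forces both terms on the left to share the sign of $\lambda_{2}$ for $T$ large. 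Taking absolute values and using $|u_{j}|\le|u|$ then produces
\[
|\lambda_{2}|\iint|u|^{p}\psi_{T}+\varepsilon K(T)\le\iint|u|\bigl(|\partial_{t}\psi_{T}|+|\Delta\psi_{T}|\bigr),
\]
with $\liminf_{T\to\infty}K(T)>0$. The second branch of (\ref{eq22}) is handled identically using the real-part equation.

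Next I would apply H\"older's inequality with weight $\psi_{T}^{1/p}$ on the right-hand side together with the routine scaling computation, obtaining $Y_{T}+\varepsilon K(T)\le CY_{T}^{1/p}T^{\sigma}$ where $Y_{T}=\iint|u|^{p}\psi_{T}$ and $\sigma=[n(p-1)/2-1]/p$; a Young-inequality absorption then gives
\[
\tfrac{1}{2}Y_{T}+\varepsilon K(T)\le C'T^{\sigma p'}=C'T^{n/2-1/(p-1)}.
\]
In the subcritical case $p<1+2/n$ the exponent is strictly negative, so sending $T\to\infty$ contradicts $\liminf\varepsilon K(T)>0$, and hence $T_{\varepsilon}<\infty$. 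The $L^{2}$ blow-up (\ref{eq23}) then follows from the standard blow-up alternative: Proposition~\ref{Prop21} produces a local existence time depending only on the $L^{2}$-norm of the data, so $\sup_{t<T_{\varepsilon}}\|u(t)\|_{L^{2}}<\infty$ would allow continuation past $T_{\varepsilon}$, contradicting maximality.

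The hardest part, I expect, is engineering $\psi_{T}$ so that three constraints hold at once: the forcing $\int f_{1}\psi_{T}(0,\cdot)$ stays bounded below, the weight $\psi_{T}^{-p'/p}(|\partial_{t}\psi_{T}|+|\Delta\psi_{T}|)^{p'}$ is integrable, and the parabolic scaling delivers the correct negative power of $T$; the need for $q\ge 2p'$ and for the ratio $|x|^{2}/T$ (rather than $|x|/T$) in the spatial cut-off comes directly from these constraints. The critical case $p=1+2/n$ makes $\sigma p'=0$ and has to be treated by a further refinement -- for instance by exploiting the resulting uniform bound $Y_{T}\le C$ to deduce $|u|^{p}\in L^{1}_{t,x}$ and then iterating, or by allowing a logarithmic modification in the cut-off -- but the subcritical argument above carries the bulk of the work.
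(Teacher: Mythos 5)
Your overall strategy --- a test-function contradiction argument applied to the real or imaginary part of the equation, with parabolic scaling $t\sim T$, $|x|\sim\sqrt{T}$, H\"older/Young absorption, and the blow-up alternative from Proposition \ref{Prop21} to deduce (\ref{eq23}) --- is exactly the route the paper takes (Lemmas \ref{Lemma 3.1} and \ref{Lemma3.2} together with the Appendix). Your sign bookkeeping for the real/imaginary split, the exponent $n/2-1/(p-1)$, and the conclusion in the subcritical range $1<p<1+2/n$ are all correct. One presentational caveat: since $u$ is only a mild $L^{2}$-solution of the integral equation (\ref{eq21}), the weak formulation is not obtained by literally integrating the PDE by parts; it requires the approximation argument of Lemma \ref{Lemma 3.1} (the paper defers this to \cite{MIYWpre}, so this is not a substantive objection).

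The genuine gap is the critical case $p=1+2/n$, which the proposition asserts but which you only gesture at. There your Young absorption yields only the uniform bound $Y_{T}\le C$, hence $|u|^{p}\in L^{1}_{t,x}$ on $[0,\infty)\times\mathbb{R}^{n}$, and no contradiction yet. Your suggestion to ``exploit the uniform bound and iterate'' is the right idea but must actually be carried out: return to the inequality \emph{before} absorption and note that $\partial_{t}\psi_{T}$ is supported in $\{aT\le t\le T\}$ and $\Delta_{x}\psi_{T}$ in $\{\sqrt{T}\le |x|\le\sqrt{2T}\}$; applying H\"older over these regions only, the factors $\bigl(\iint_{\mathrm{supp}}|u|^{p}\bigr)^{1/p}$ are tails of a convergent integral and tend to $0$ as $T\to\infty$, which kills the right-hand side and contradicts $\liminf_{T\to\infty}\varepsilon K(T)>0$. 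The paper implements precisely this refinement with different hardware: a two-parameter time cutoff $\eta_{S,T}$ whose constant $B(S,T)$ in (\ref{3.2a}) is made small by taking $S$ small and $\theta$ large, together with a non-compactly-supported spatial weight satisfying $|\Delta\phi|\le\mu\phi$, leading to (\ref{2.8})--(\ref{2.9}). Either device closes the argument; without executing one of them the critical case --- half of the stated range --- remains unproved in your write-up.
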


It is well known that the similar result holds for the corresponding
nonlinear heat equation and the damped wave equation. Proposition \ref%
{Prop22} can be said to be the NLS version. We will give a proof of the
proposition different from \cite{MIYWpre} in Appendix.

\begin{remark}
\label{Remark 2.2} In \cite{MIYWpre}, in order to prove $T_{\varepsilon
}<\infty ,$ a contradiction argument based on papers \cite{Zha99}, \cite%
{Zha01} was used. Therefore, an upper bound of the lifespan was not obtained.
\end{remark}

Next, we state our main result in this paper, which gives an upper bound of
the lifespan. We put the more additional assumption on the data:%
\begin{equation}
\left\{ 
\begin{array}{c}
\text{\textquotedblleft }f_{1}\in L^{1},\text{ }\lambda _{2}f_{1}\left(
x\right) \geq \left\vert x\right\vert ^{-k},\text{ }\left\vert x\right\vert
>1\text{\textquotedblright } \\ 
\text{or \textquotedblleft }f_{2}\in L^{1},\text{ }-\lambda _{1}f_{2}\left(
x\right) \geq \left\vert x\right\vert ^{-k},\text{ }\left\vert x\right\vert
>1\text{\textquotedblright }%
\end{array}%
\right. \text{\ }  \label{eq24}
\end{equation}%
where $n<k<2/\left( p-1\right) ,$ which exists, if $1<p<1+2/n.$ We also note
that the function of the right hand side of (\ref{eq24}) belongs to $%
L^{1}\cap L^{2}.$ Then the following is valid;

\begin{theorem}
\label{Theorem 2} Let $1<p<1+2/n,$ $\lambda \in \mathbb{C}\setminus \left\{
0\right\} $ and $f\in L^{2}.$ If $f$ satisfies (\ref{eq24}), then there
exist $\varepsilon _{0}>0$ and positive constant $C=C\left( k,p,\lambda
\right) $ such that%
\begin{equation*}
T_{\varepsilon }\leq C\varepsilon ^{1/\kappa }
\end{equation*}
for any $\varepsilon \in \left( 0,\varepsilon _{0}\right) $ where $\kappa
\equiv k/2-1/\left( p-1\right) .$
\end{theorem}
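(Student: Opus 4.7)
The natural line of attack is the \emph{test function method}, in the spirit of Mitidieri--Pohozaev and Zhang, adapted to the non-gauge-invariant NLS. Assume without loss of generality the first alternative in (\ref{eq24}) and (up to replacing $\psi$ by $-\psi$) that $\lambda_{2}>0$; the other case is handled symmetrically via the real part of the equation. Pair (\ref{eq11}) with a nonnegative, real test function $\psi(t,x)$ vanishing at $t=T$, integrate by parts once in $t$ and twice in $x$, and take the imaginary part of the identity. Writing $u=u_{1}+iu_{2}$ one obtains
\begin{equation*}
\lambda_{2}\int_{0}^{T}\!\!\int|u|^{p}\psi\,dx\,dt+\varepsilon\int f_{1}(x)\psi(0,x)\,dx=-\int_{0}^{T}\!\!\int u_{1}\,\partial_{t}\psi\,dx\,dt+\int_{0}^{T}\!\!\int u_{2}\,\Delta\psi\,dx\,dt.
\end{equation*}
Bounding $|u_{j}|\leq|u|$, applying H\"older's inequality with exponents $(p,p')$ to each term on the right, and using Young's inequality to absorb $\int|u|^{p}\psi$ back into the left, I obtain the a priori estimate
\begin{equation*}
\varepsilon\int f_{1}(x)\psi(0,x)\,dx\leq C\int_{0}^{T}\!\!\int\bigl(|\partial_{t}\psi|^{p'}+|\Delta\psi|^{p'}\bigr)\psi^{-p'/p}\,dx\,dt.
\end{equation*}

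The decisive step is to choose a test function that converts the pointwise lower bound in (\ref{eq24}) into a useful $T$-dependent lower bound on $\int f_{1}\psi(0,\cdot)$. I would take $\psi(t,x)=\eta(t/T)^{q}\,\phi(x/T^{1/2})$, where $\eta\in C_{c}^{\infty}([0,1))$ is a standard cutoff equal to $1$ on $[0,1/2]$, $q\geq p'$ is a large integer, and $\phi\in C_{c}^{\infty}(\mathbb{R}^{n})$ is a nonnegative bump supported in the annulus $\{1<|y|<2\}$, bounded below by $c_{0}>0$ on a sub-annulus, and vanishing at $\partial\operatorname{supp}\phi$ to high enough algebraic order that $\int(|\Delta\phi|^{p'}\phi^{-p'/p}+\phi)\,dy<\infty$. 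The parabolic dilation $x/T^{1/2}$ is the unique one matching the $\partial_{t}$- and $\Delta$-contributions; a direct scaling calculation then gives
\begin{equation*}
\int_{0}^{T}\!\!\int\bigl(|\partial_{t}\psi|^{p'}+|\Delta\psi|^{p'}\bigr)\psi^{-p'/p}\,dx\,dt\leq C\,T^{1-p'+n/2}=C\,T^{n/2-1/(p-1)}.
\end{equation*}
Because $\psi(0,\cdot)=\phi(\cdot/T^{1/2})$ is supported in $\{T^{1/2}<|x|<2T^{1/2}\}\subset\{|x|>1\}$ for $T>1$, the hypothesis (\ref{eq24}) applies pointwise on the support, and a direct computation yields
\begin{equation*}
\lambda_{2}\int f_{1}(x)\psi(0,x)\,dx\geq\int_{T^{1/2}<|x|<2T^{1/2}}|x|^{-k}\phi(x/T^{1/2})\,dx\geq c\,T^{(n-k)/2}.
\end{equation*}
Combining, $\varepsilon T^{(n-k)/2}\leq CT^{n/2-1/(p-1)}$, equivalently $\varepsilon\leq CT^{k/2-1/(p-1)}=CT^{\kappa}$; since $\kappa<0$, solving for $T$ yields $T\leq C'\varepsilon^{1/\kappa}$ for every $T\in(1,T_{\varepsilon})$, and choosing $\varepsilon_{0}$ so small that $C'\varepsilon_{0}^{1/\kappa}>1$ extends the bound to $T_{\varepsilon}$ itself.

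The principal obstacle is the rigorous justification of the weak formulation above: the solution given by Proposition~\ref{Prop21} lies only in $C([0,T);L^{2})\cap L^{r}(0,T;L^{p+1})$ and is a priori not smooth enough to integrate by parts classically. I would handle this either by regularization (mollify the initial data, derive the identity for the smooth approximating solution, and pass to the limit using continuity of the solution map in the $L^{r}_{t}L^{p+1}_{x}$-topology) or by testing the Duhamel identity (\ref{eq21}) directly against $\psi$ and invoking Fubini. Subsidiary bookkeeping --- choosing $q$ so that $\eta^{q-p'}|\eta'|^{p'}$ is integrable up to $t=T$, and the vanishing order of $\phi$ so that $\phi^{-p'/p}$ does not ruin the integrability of $|\Delta\phi|^{p'}\phi^{-p'/p}$ --- is standard. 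The annular (rather than ball-centered) support of $\phi$ is essential: this geometric choice is what turns the pointwise decay in (\ref{eq24}) into the factor $T^{(n-k)/2}$ and thereby produces the precise exponent $\kappa=k/2-1/(p-1)$ that appears in the theorem.
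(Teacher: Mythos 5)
Your argument is correct, and it is at bottom the same test-function method the paper uses, with the same parabolic space--time scaling producing the exponent $\kappa=k/2-1/(p-1)$; but your execution differs enough to be worth comparing. The paper works with a globally supported weight $\phi$ satisfying $|\Delta\phi|\leq\mu\phi$ (so the Laplacian term is controlled by the pointwise inequality rather than by high-order vanishing at the edge of a compact support), keeps the spatial scale $R$ and the rescaled time $\tau$ as \emph{independent} parameters through Lemmas \ref{Lemma3.2}--\ref{Lemma 3.3}, and only at the end minimizes $H(\tau,R)$ over $R$ by a convexity/tangent-line argument --- arriving at $R_{\tau}\sim\tau^{1/2}$, which is precisely the relation $R=T^{1/2}$ you impose from the outset; your one-shot Young-inequality absorption plays the role of the paper's function $\Psi(\sigma,\omega)=\max_{x\geq0}(\sigma x^{\omega}-x)$. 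Your annular, compactly supported $\phi$ has a genuine advantage: $\psi(0,\cdot)$ is then supported entirely in $\{|x|>1\}$, where the pointwise hypothesis (\ref{eq24}) actually holds, so the lower bound $c\,T^{(n-k)/2}$ is immediate; the paper's globally supported weight forces it to discard the region $|x|<1/R$ when bounding $J_{R}$ from below, a step that tacitly uses a sign condition on the data near the origin which (\ref{eq24}) does not literally supply. The one gap you correctly flag --- justifying the weak formulation for a solution lying only in $C([0,T);L^{2})\cap L^{r}_{t}L^{p+1}_{x}$ --- is resolved in the paper by Lemma \ref{Lemma 3.1} (obtained by testing the Duhamel identity (\ref{eq21}), as in \cite{MIYWpre}), which is exactly the second of your two proposed remedies; with that in place, your proof is complete.
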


\begin{remark}
\label{Remark 2.1} We note that there is a gap between the lower bound (see
Corollary \ref{Cor 2.1}) and the upper bound in $L^{2}$-framework, that is $%
\kappa >\omega .$
\end{remark}

Next, we consider the possibility to fill the gap in other frameworks.
Especially, we consider the local existence for (\ref{eq21}) in $H^{1}\cap
L^{1+1/p}$-framework.

Let $\gamma >\frac{2}{n}\frac{p+1}{p-1}.$ The following result is valid:

\begin{proposition}
\label{Prop 2.2} Let $1<p<1+4/\left( n-2\right) ,$ $\lambda \in \mathbb{C},$ 
$\varepsilon \geq 0$ and $f\in H^{1}\cap L^{1+1/p}.$ Then there exist a
positive time $T=T\left( \varepsilon ,\left\Vert f\right\Vert _{H^{1}\cap
L^{1+1/p}}\right) $ and a unique solution $u\in C\left( \left[ 0,T\right)
;H^{1}\right) \cap L_{t}^{\gamma }\left( 0,T;L_{x}^{\rho }\right) \cap
L_{t}^{r}\left( 0,T;W^{1,\rho }\right) $ for (\ref{eq21}), where $\rho =p+1,$
$r$ is given by $2/r=n\left( 1/2-1/\rho \right) .$
\end{proposition}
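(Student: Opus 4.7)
The plan is a standard Banach fixed-point argument applied to the Duhamel map
$$\Phi(u)(t) = \varepsilon U(t) f - i\lambda \int_0^t U(t-s)|u(s)|^p\,ds$$
on a closed ball
$$B_M = \{u \in X_T : \|u\|_{X_T} \leq M\}, \quad X_T := C([0,T];H^1) \cap L^\gamma(0,T;L^\rho) \cap L^r(0,T;W^{1,\rho}),$$
equipped with a weaker metric (e.g.\ the $L^r_t L^\rho_x \cap L^\gamma_t L^\rho_x$ distance) that still renders $B_M$ complete. I would choose $M$ comparable to $\varepsilon\|f\|_{H^1 \cap L^{1+1/p}}$ and then show that, for $T=T(\varepsilon,\|f\|_{H^1 \cap L^{1+1/p}})$ small enough, $\Phi$ is both a self-map of $B_M$ and a strict contraction.

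For the linear term $\varepsilon U(t)f$ the $L^\infty_t H^1$ and $L^r_t W^{1,\rho}$ pieces come from the classical $L^2$-Strichartz estimate for the admissible pair $(r,\rho)$, applied to $f$ and $\nabla f$. The $L^\gamma_t L^\rho_x$ piece is the new ingredient and is driven by the dispersive estimate
$$\|U(t)g\|_{L^\rho_x} \leq C|t|^{-\alpha}\|g\|_{L^{\rho'}_x}, \qquad \alpha = \tfrac{n(p-1)}{2(p+1)}, \qquad \rho' = \tfrac{p+1}{p} = 1+\tfrac{1}{p},$$
which is exactly where the hypothesis $f \in L^{1+1/p}$ enters ($\rho'$ is the dual of $\rho = p+1$). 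For $\gamma \leq r$ the bound in $L^\gamma(0,T;L^\rho)$ follows by interpolation between this dispersive estimate and the Strichartz estimate at $(r,\rho)$; for $\gamma > r$ one instead bounds the $L^\gamma L^\rho$ norm by $T^{1/\gamma}\|U(t)f\|_{L^\infty_t L^\rho}$ using the Sobolev embedding $H^1 \hookrightarrow L^{p+1}$, which is legitimate since $p < 1+4/(n-2)$.

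The Duhamel term requires two parallel estimates. For the $L^\infty H^1 \cap L^r W^{1,\rho}$ parts I would invoke inhomogeneous Strichartz together with the fractional chain-rule bound $|\nabla(|u|^p)| \lesssim |u|^{p-1}|\nabla u|$, controlling $|u|^{p-1}$ in the appropriate $L^q$ norm via Sobolev embedding from $H^1$ (where $p < 1+4/(n-2)$ makes the exponents admissible). For $L^\gamma L^\rho$, the pointwise identity $\||u|^p\|_{L^{\rho'}_x} = \|u\|_{L^\rho_x}^p$ combined with the dispersive estimate reduces matters to a time convolution with the kernel $|t|^{-\alpha}$; this is handled by Hardy--Littlewood--Sobolev, and the lower bound $\gamma > 2(p+1)/(n(p-1)) = 1/\alpha$ is exactly what is needed to put HLS into non-endpoint form. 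The contraction follows by the same toolkit applied with $\bigl||u|^p-|v|^p\bigr| \lesssim (|u|^{p-1}+|v|^{p-1})|u-v|$.

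The main obstacle will be bookkeeping: every nonlinear estimate must gain a positive power of $T$ so that smallness of $T$ gives both self-map and contraction. The $L^\gamma_t L^\rho_x$ piece is the subtlest, because the dispersive kernel $|t|^{-\alpha}$ sits precisely at the critical Lorentz exponent $1/\alpha = 2(p+1)/(n(p-1))$; the hypothesis $\gamma > 2(p+1)/(n(p-1))$ is forced on us by this HLS threshold, and the compatibility of this HLS step with the $H^1$/Strichartz estimates above is what determines the admissible range of $p$ and $\gamma$.
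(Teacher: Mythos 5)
Your contraction-mapping scheme built on Strichartz estimates for the $H^{1}$ and $W^{1,\rho}$ components and the dispersive bound $\|U(t)g\|_{L^{\rho}}\leq C|t|^{-\alpha}\|g\|_{L^{\rho'}}$ with $\alpha=\frac{n(p-1)}{2(p+1)}$, $\rho'=1+1/p$, is essentially the approach the paper itself invokes: the paper supplies no proof, deferring to Theorem 6.3.2 of Cazenave's book (and Strauss), whose method is exactly this fixed-point argument. The one step to handle more carefully is the free term in $L^{\gamma}_{t}L^{\rho}_{x}$: since the standing hypothesis $\gamma>\frac{2}{n}\frac{p+1}{p-1}=1/\alpha$ makes $|t|^{-\alpha}$ fail to belong to $L^{\gamma}(0,T)$ near $t=0$, a strong-type interpolation against the dispersive estimate diverges there, so for every $\gamma$ (not only $\gamma>r$) one should instead use $\|U(\cdot)f\|_{L^{\gamma}(0,T;L^{\rho})}\leq CT^{1/\gamma}\|f\|_{H^{1}}$ via the Sobolev embedding $H^{1}\hookrightarrow L^{p+1}$, or H\"older in time off the $L^{r}_{t}L^{\rho}_{x}$ Strichartz norm when $\gamma\leq r$; your identification of $\gamma>1/\alpha$ as the non-endpoint condition for the Hardy--Littlewood--Sobolev step on the Duhamel term is correct.
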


This proposition can be proved in the almost same mannar as in the proof of
Theorem 6.3.2 in \cite{Caze03} (see also \cite{Strauss81}). From the
proposition, a lower bound of the lifespan also follows immediately. Let $%
\tilde{T}_{\varepsilon }$ be the maximal existence time of the local
solution obtained in Proposition \ref{Prop 2.2}.

\begin{corollary}
\label{Cor 3.2} Under the same assumptions as in Proposition \ref{Prop 2.2},
the inequality is valid: 
\begin{equation*}
\tilde{T}_{\varepsilon }\geq C\varepsilon ^{1/\sigma },
\end{equation*}
where $\sigma =\frac{1}{\gamma }+\frac{n}{2\left( p+1\right) }-\frac{1}{p-1}$
and $C=C\left( n,p,\left\Vert f\right\Vert _{H^{1}\cap L^{1+1/p}}\right) $
is some positive constant.
\end{corollary}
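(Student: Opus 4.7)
The plan is to revisit the contraction-mapping argument underlying Proposition \ref{Prop 2.2} and extract the explicit dependence of its time of existence on $\varepsilon$. Put $\Phi(u)(t) = \varepsilon U(t)f - i\lambda \int_{0}^{t} U(t-s)|u(s)|^{p}\,ds$ and work in the Banach space
\[
X_{T} \equiv C([0,T];H^{1}) \cap L^{\gamma}(0,T;L^{\rho}) \cap L^{r}(0,T;W^{1,\rho}),
\]
with $(r,\rho)$ the Strichartz-admissible pair of Proposition \ref{Prop 2.2}. Setting up a contraction on a ball in $X_{T}$ of radius $R$, we must bound $\|\varepsilon U(\cdot)f\|_{X_{T}}$ by $R/2$ and show that the Duhamel operator sends the ball into itself.

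For the linear part I would apply Strichartz on the admissible pair $(r,\rho)$, which controls $\|U(\cdot)f\|_{L^{r}(W^{1,\rho})} + \|U(\cdot)f\|_{L^{\infty}(H^{1})}$ by $C\|f\|_{H^{1}}$ uniformly in $T$. The remaining piece $\|U(\cdot)f\|_{L^{\gamma}(0,T;L^{\rho})}$ I would split at $t=1$: on $(0,1)$ Sobolev embedding $H^{1}\hookrightarrow L^{\rho}$ together with unitarity gives the bound $\|f\|_{H^{1}}$, while on $(1,T)$ I would invoke the dispersive estimate
\[
\|U(t)f\|_{L^{\rho}_{x}} \leq C\, t^{-\alpha}\|f\|_{L^{\rho'}_{x}}, \qquad \alpha \equiv \tfrac{n(p-1)}{2(p+1)}.
\]
The hypothesis $\gamma > 2(p+1)/(n(p-1))=1/\alpha$ is exactly $\alpha\gamma>1$, so $t^{-\alpha}\in L^{\gamma}(1,\infty)$. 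The upshot is a $T$-independent bound $\|U(\cdot)f\|_{X_{T}} \leq C(\|f\|_{H^{1}}+\|f\|_{L^{1+1/p}})$, which is where the $L^{1+1/p}$ hypothesis on $f$ enters.

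For the Duhamel term I would combine dual Strichartz on $(r,\rho)$ with H\"older in time. Since $\rho'=\rho/p$, we have $\||u|^{p}\|_{L^{r'}_{t}L^{\rho'}_{x}} = \|u\|_{L^{pr'}_{t}L^{\rho}_{x}}^{p}$; a first H\"older in time pushes this to $T^{p(1/pr'-1/\gamma)}\|u\|_{L^{\gamma}_{t}L^{\rho}_{x}}^{p}$, and a second H\"older converts the resulting $L^{r}_{t}L^{\rho}_{x}$ norm of the Duhamel integral into its $L^{\gamma}_{t}L^{\rho}_{x}$ norm with gain $T^{1/\gamma-1/r}$. Summing the time powers and substituting $2/r = n(p-1)/(2(p+1))$ collapses the total exponent to $-(p-1)\sigma$ with $\sigma$ exactly as in the statement. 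The $W^{1,\rho}$ part is handled identically using $\nabla|u|^{p}=p|u|^{p-2}u\,\nabla u$ and H\"older, and differences $\Phi(u)-\Phi(v)$ are treated in the same way.

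Choosing $R \sim \varepsilon(\|f\|_{H^{1}}+\|f\|_{L^{1+1/p}})$, the contraction closes as soon as $C\,T^{-(p-1)\sigma}R^{p-1}\leq 1/2$, i.e.\ $\varepsilon^{p-1}T^{-(p-1)\sigma}\leq c$, equivalently $T^{-\sigma}\leq c\varepsilon^{-1}$. In the regime of interest $\sigma<0$, this rearranges to $T\leq c\,\varepsilon^{1/\sigma}$, and taking the largest admissible $T$ produces the announced lower bound for $\tilde{T}_{\varepsilon}$. The main delicate step is the linear estimate: without splitting the time integral and using \emph{both} the $H^{1}$ norm for short times and the dispersive/$L^{\rho'}$ bound for long times, one only recovers the cruder $L^{2}$-based exponent $\omega$ of Corollary \ref{Cor 2.1}, and the sharper exponent $\sigma$ is lost.
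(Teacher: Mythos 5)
Your argument is correct and is essentially the proof the paper has in mind: the paper gives no explicit proof of this corollary, asserting only that it "follows immediately" from Proposition \ref{Prop 2.2} (itself proved "in the almost same manner as Theorem 6.3.2 in \cite{Caze03}"), and what you have written is precisely that quantitative contraction argument with the $\varepsilon$-dependence tracked. In particular you correctly identify the two decisive points — the $T$-uniform bound on $\|\varepsilon U(\cdot)f\|_{L^{\gamma}L^{\rho}}$ via the dispersive estimate with $\alpha\gamma>1$ (which is where $f\in L^{1+1/p}$ and the hypothesis $\gamma>\frac{2}{n}\frac{p+1}{p-1}$ enter), and the bookkeeping $1-\frac{2}{r}-\frac{p-1}{\gamma}=-(p-1)\sigma$ that produces the exponent $\sigma$.
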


\begin{remark}
\label{Remark 3} The same conclusion as in Theorem \ref{Theorem 2} holds
even if $\left( T_{\varepsilon },L^{2}\right) $ is replaced by $\left( 
\tilde{T}_{\varepsilon },H^{1}\cap L^{1+1/p}\right) .$ In Corollary \ref{Cor
3.2}, since%
\begin{equation*}
\sigma \rightarrow \frac{n}{2}\left( 1-\frac{1}{p+1}\right) -\frac{1}{p-1}
\end{equation*}%
as $r\rightarrow \frac{2}{n}\frac{p+1}{p-1},$ we can see that there is also
gap between the upper bound and the lower bound of the lifespan in $%
H^{1}\cap L^{1+1/p}$-framework, though the lower estimate is improved, i.e. $%
\kappa >\sigma >\omega $ as $r\rightarrow \frac{2}{n}\frac{p+1}{p-1}.$
\end{remark}

\begin{remark}
\label{Remark 2} In the critical case $p=1+2/n,$ we do not know an upper
bound of lifespan for (\ref{eq11})-(\ref{eq12}).
\end{remark}

At the end of this section, we mention the strategy of the proof of Theorem %
\ref{Theorem 2}. We will use a test-function method based on papers \cite%
{HJK03}, \cite{FS10}. In \cite{HJK03}, \cite{FS10}, upper bounds of lifespan
for some parabolic equations were obtained. However, their arguement does
not be applicable to the present NLS directly. Since solutions for NLS are
complex-valued, the constant $\lambda $ in front of the nonlinearity is a
complex number and especially, the appropriate function spaces for NLS
differs from that of those parabolic equations. To overcome these
difficulties, we will consider the real part or imaginary part for the
equation and reconsider the problem under the suitable function spaces $%
L^{2} $ $\left( \text{or }H^{1}\cap L^{1+1/p}\right) $ to NLS, so that we
can use the local existence theorem.

\section{\label{S2.5} Integral inequalities}

In this section, we prepare some integral inequalities. Before doing so, we
introduce the non-negative smooth function $\phi $ as follows, which was
constructed in the papers \cite{ChGa01}, \cite{GaPo02}:%
\begin{equation*}
\phi \left( x\right) =\phi \left( \left\vert x\right\vert \right) ,\text{ }%
\phi \left( 0\right) =1,\text{ }0<\phi \left( x\right) \leq 1\text{ for }%
\left\vert x\right\vert >0,
\end{equation*}%
where $\phi \left( \left\vert x\right\vert \right) $ is decreasing of $%
\left\vert x\right\vert $ and $\phi \left( \left\vert x\right\vert \right)
\rightarrow 0$ as $\left\vert x\right\vert \rightarrow \infty $ sufficiently
fast. Moreover, there exists $\mu >0$ such that%
\begin{equation}
\left\vert \Delta \phi \right\vert \leq \mu \phi ,\text{ \ }x\in \mathbb{R}%
^{n},  \label{2.1}
\end{equation}%
and $\left\Vert \phi \right\Vert _{L^{1}}=1.$ This can be done by letting $%
\phi \left( r\right) =e^{-r^{\nu }}$ for $r\gg 1$ with $\nu \in \left( 0,1%
\right] $ and extending $\phi $ to $\left[ 0,\infty \right) $ by a smooth
approximation. Let $\theta $ be suffuciently large and%
\begin{equation*}
\eta \left( t\right) =\eta _{S,T}\left( t\right) =\left\{ 
\begin{array}{l}
0, \\ 
\left( 1-\left( t-S\right) /\left( T-S\right) \right) ^{\theta }, \\ 
1,%
\end{array}%
\begin{array}{l}
\text{if }t>T, \\ 
\text{if }S\leq t\leq T, \\ 
\text{if }t<S,%
\end{array}%
\right.
\end{equation*}%
where $0\leq S<T.$ Furthermore, set $\eta _{R}\left( t\right) =\eta \left(
t/R^{2}\right) ,$ $\phi _{R}\left( x\right) =\phi \left( x/R\right) $ and $%
\psi _{R}\left( t,x\right) =\eta _{R}\left( t\right) \phi _{R}\left(
x\right) $ for $R>0.$

First, we reduce the integral equation (\ref{eq21}) into the weak form.

\begin{lemma}
\label{Lemma 3.1} Let $u$ be an $L^{2}$-solution of \textrm{(\ref{eq11})-(%
\ref{eq12})} on $[0,T).$ Then $u$ satisfies%
\begin{eqnarray}
\lefteqn{\int_{\left[ 0,T\right) \times \mathbb{R}^{n}}u(-i\partial
_{t}\left( \psi _{R}\right) +\Delta \left( \psi _{R}\right) )dxdt}  \notag \\
&=&i\varepsilon \int_{\mathbb{R}^{n}}f\left( x\right) \psi _{R}\left(
0,x\right) dx+\lambda \int_{\left[ 0,T\right) \times \mathbb{R}%
^{n}}\left\vert u\right\vert ^{p}\psi _{R}dxdt.  \label{3.1}
\end{eqnarray}
\end{lemma}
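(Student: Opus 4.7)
The plan is to substitute the Duhamel formula \eqref{eq21} directly into the left-hand side of \eqref{3.1} and evaluate the two resulting pieces using the following auxiliary identity: for any $g\in L^{2}$, any $s\in[0,T)$, and any smooth $h(t,x)$ with $h,\partial_{t}h,\Delta h\in L^{1}_{t,\mathrm{loc}}L^{2}_{x}$ such that $h(t,\cdot)\equiv 0$ for $t$ large enough,
\begin{equation*}
\int_{s}^{T}\!\!\int_{\mathbb{R}^{n}} U(t-s)g(x)\bigl(-i\partial_{t}h+\Delta h\bigr)(t,x)\,dx\,dt \;=\; i\int_{\mathbb{R}^{n}} g(x)\,h(s,x)\,dx.
\end{equation*}
The idea of this identity is that for smooth $g$ the function $v(t,x):=U(t-s)g(x)$ solves $i\partial_{t}v+\Delta v=0$ classically; integrating by parts in $x$ replaces $\int v\,\Delta h$ by $\int \Delta v\cdot h=-i\int \partial_{t}v\cdot h$, and then
\begin{equation*}
v\cdot(-i\partial_{t}h)+(-i\partial_{t}v)\cdot h \;=\; -i\,\partial_{t}(vh),
\end{equation*}
so the spatial integral reduces to a total time derivative, and time integration leaves only the $t=s$ boundary term $i\int g(x)h(s,x)\,dx$. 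General $g\in L^{2}$ then follows by density since $\|U(\cdot)g\|_{L^{2}}=\|g\|_{L^{2}}$ and $h,\partial_{t}h,\Delta h$ are integrable in $L^{2}$ against $t$ on the compact time support of $h$.

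Applied to the first term of \eqref{eq21} with $s=0$, $g=\varepsilon f$, $h=\psi_{R}$, this yields
\begin{equation*}
\int_{[0,T)\times\mathbb{R}^{n}} \varepsilon U(t)f\cdot\bigl(-i\partial_{t}\psi_{R}+\Delta\psi_{R}\bigr)\,dx\,dt \;=\; i\varepsilon\int_{\mathbb{R}^{n}} f(x)\,\psi_{R}(0,x)\,dx,
\end{equation*}
which matches the first term on the right of \eqref{3.1}. For the Duhamel term, I would first interchange the order of integration via Fubini, which is legal because $|u|^{p}\in L_{t}^{r/p}L_{x}^{\rho/p}$ on $[0,T')\times\mathbb{R}^{n}$ for every $T'<T$ (by Proposition~\ref{Prop21}) while $\psi_{R}$ and its derivatives are smooth with compact support in $t$ and enjoy the decay of $\phi_{R}$ in $x$. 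After swapping, the inner integral over $t$ is of the form handled by the auxiliary identity with $g=|u(s,\cdot)|^{p}$ and $h=\psi_{R}$, producing $i\int|u(s,x)|^{p}\psi_{R}(s,x)\,dx$; multiplying by the prefactor $-i\lambda$ and integrating in $s$ gives exactly $\lambda\int\!\int |u|^{p}\psi_{R}\,dx\,dt$.

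The main obstacle is justifying the integration by parts in the auxiliary identity for $L^{2}$ data, since $U(t)f$ is not a classical solution. I would address this by approximating $f\in L^{2}$ by Schwartz functions $f_{m}$, for which $U(t)f_{m}$ is smooth and the identity is a direct calculation as sketched, and then passing to the limit using the $L^{2}$-isometry of $U(t)$ together with the fact that $\psi_{R}(t,\cdot)$, $\partial_{t}\psi_{R}(t,\cdot)$ and $\Delta\psi_{R}(t,\cdot)$ lie in $L^{2}$ uniformly in $t$ on the (compact) time support of $\psi_{R}$. No other steps present serious difficulty: the spatial decay built into $\phi_{R}$ via \eqref{2.1} guarantees absolute convergence of every integral that appears, and the identity \eqref{3.1} follows by combining the two computations above.
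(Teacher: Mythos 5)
The paper does not actually prove this lemma --- it only says the proof is ``in the same manner as the proof of Proposition 3.1 in \cite{MIYWpre}'' --- so your argument can only be measured against the standard duality computation, which is indeed the route you take: substitute the Duhamel formula \eqref{eq21} into the left side of \eqref{3.1}, reduce each piece to the identity $\int_s^T\int U(t-s)g\,(-i\partial_t h+\Delta h)\,dx\,dt=i\int g\,h(s,\cdot)\,dx$ for free waves, and justify the identity by regularizing the data. The algebra of the auxiliary identity is correct (using $\Delta v=-i\partial_t v$ and the total derivative $-i\partial_t(vh)$), the treatment of the linear term $\varepsilon U(t)f$ with $f\in L^2$ is fine, and your added hypothesis that $h$ vanish for large $t$ correctly supplies the support condition (in the paper's applications, $TR^2<T_*$) that kills the boundary term at the right endpoint.

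There is, however, one concrete gap: you state the auxiliary identity for $g\in L^2$ and then apply it slice-by-slice to $g=|u(s,\cdot)|^p$, but this function is in general \emph{not} in $L^2$. From Proposition~\ref{Prop21} one only knows $u(s)\in L^2\cap L^{p+1}$ for a.e.\ $s$, hence $|u(s)|^p\in L^{2/p}\cap L^{(p+1)/p}$, and both exponents are strictly less than $2$ since $p>1$; interpolation cannot reach $L^2$. So the density-in-$L^2$ argument you invoke does not cover the nonlinear term. The fix is not difficult but must be said: extend the identity to $g\in L^{\rho'}$ with $\rho'=(p+1)/p$ by writing $\int (U(t-s)g)\,w\,dx=\int g\,(U(t-s)w)\,dx$ and using the dispersive bound $\|U(t-s)w\|_{L^{\rho}}\lesssim |t-s|^{-n(1/2-1/\rho)}\|w\|_{L^{\rho'}}$, whose singularity is integrable in $t$ because $p<1+4/n$; this also furnishes the absolute convergence needed for your Fubini step. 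Alternatively, one can avoid the slice-by-slice argument altogether by noting that the Duhamel term lies in $C([0,T);L^2)$ by the Strichartz estimates and is a weak (distributional) solution of $i\partial_t F+\Delta F=\lambda|u|^p$ with $F(0)=0$, then integrating by parts against $\psi_R$ once. Either repair completes your proof; as written, the key step for the nonlinear term rests on an identity whose hypotheses it does not satisfy.
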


This lemma can be proved in the same manner as the proof of Proposition 3.1
in \cite{MIYWpre}.

Next, we will lead a integral inequality. Hereafter we only consider the
case of $\lambda _{1}>0$ for simplicity. The other cases can be treated in
the almost same way (see Remark \ref{Remark 3.1}).

We introduce some functions:%
\begin{eqnarray*}
I_{R}\left( S,T\right) &=&\int_{\left[ SR^{2},TR^{2}\right) \times \mathbb{R}%
^{n}}\left\vert u\right\vert ^{p}\psi _{R}dxdt, \\
J_{R} &=&\varepsilon \int_{\mathbb{R}^{n}}-f_{2}\left( x\right) \phi \left(
x/R\right) dx
\end{eqnarray*}%
and%
\begin{eqnarray*}
A\left( S,T\right) &=&\left( \int_{\left[ S,T\right) \times \mathbb{R}%
^{n}}\left\vert \partial _{t}\eta \left( t\right) \right\vert ^{q}\eta
\left( t\right) ^{-1/\left( p-1\right) }\phi \left( x\right) dxdt\right)
^{1/q}, \\
B\left( S,T\right) &=&\left( \int_{\left[ 0,T\right) \times \mathbb{R}%
^{n}}\eta _{S,T}\left( t\right) \phi \left( x\right) dxdt\right) ^{1/q},
\end{eqnarray*}%
where $q=p/\left( p-1\right) .$ By the direct computation, we have%
\begin{equation}
A\left( S,T\right) =\theta \left\{ \theta -1/\left( p-1\right) \right\}
^{-1/q}\left( T-S\right) ^{-1/p},\text{ }B\left( S,T\right) =\left( S+\frac{%
T-S}{\theta +1}\right) ^{1/q}.  \label{3.2a}
\end{equation}

We have the following:

\begin{lemma}
\label{Lemma3.2} Let $u$ be an $L^{2}$-solution of \textrm{(\ref{eq11})-(\ref%
{eq12})} on $\left[ 0,T_{\ast }\right) .$ Then the inequality holds%
\begin{equation}
\lambda _{1}I_{R}\left( 0,T\right) +J_{R}\leq R^{s}\left\{ I_{R}\left(
S,T\right) ^{1/p}A\left( S,T\right) +\mu I_{R}\left( 0,T\right)
^{1/p}B\left( S,T\right) \right\}  \label{3.4}
\end{equation}%
for any $0\leq S<T$ and $R>0$ with $TR^{2}<T_{\ast },$ where $s=-2+\left(
2+n\right) /q.$
\end{lemma}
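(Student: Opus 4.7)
The plan is to derive the inequality from the weak formulation in Lemma 3.1 by taking the real part, then applying Hölder's inequality with the test function $\psi_R$ playing the weight role, exactly as in the Baras--Kersner / Mitidieri--Pohozaev test-function method adapted to NLS.

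First I would take the real part of identity (\ref{3.1}). Since $\psi_R$ is real-valued, writing $u=u_1+iu_2$, $f=f_1+if_2$, $\lambda=\lambda_1+i\lambda_2$ gives
\begin{equation*}
\mathrm{Re}\!\int u(-i\partial_t\psi_R+\Delta\psi_R)\,dxdt=\int(u_1\Delta\psi_R+u_2\partial_t\psi_R)\,dxdt,
\end{equation*}
while the real part of the right-hand side of (\ref{3.1}) equals $-\varepsilon\int f_2(x)\phi(x/R)\,dx+\lambda_1\int|u|^p\psi_R\,dxdt=J_R+\lambda_1 I_R(0,T)$, using that $\psi_R(0,x)=\phi(x/R)$ because $\eta_{S,T}(0)=1$. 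Bounding $|u_1|,|u_2|\leq|u|$ then yields
\begin{equation*}
\lambda_1 I_R(0,T)+J_R\leq\int|u||\partial_t\psi_R|\,dxdt+\int|u||\Delta\psi_R|\,dxdt.
\end{equation*}

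Next I would estimate the two terms by Hölder, splitting $|u|=|u|\psi_R^{1/p}\cdot\psi_R^{-1/p}$ with conjugate exponents $p,q$. For the time-derivative term, note that $\partial_t\psi_R$ is supported in $[SR^2,TR^2)\times\mathbb{R}^n$, and
\begin{equation*}
\psi_R^{-q/p}|\partial_t\psi_R|^q=|\eta_R'(t)|^q\eta_R(t)^{-1/(p-1)}\phi_R(x),
\end{equation*}
since $q-q/p=1$ and $q/p=1/(p-1)$. For the Laplacian term I use the key hypothesis (\ref{2.1}): $|\Delta\phi_R(x)|=R^{-2}|(\Delta\phi)(x/R)|\leq \mu R^{-2}\phi_R(x)$, so $|\Delta\psi_R|\leq\mu R^{-2}\psi_R$, and Hölder applied on the full support gives
\begin{equation*}
\int|u||\Delta\psi_R|\,dxdt\leq\mu R^{-2}\,I_R(0,T)^{1/p}\Bigl(\int\eta_R(t)\phi_R(x)\,dxdt\Bigr)^{1/q}.
\end{equation*}

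Finally I would perform the scaling change of variables $\tau=t/R^2$, $\xi=x/R$ to reduce both weight integrals to $A(S,T)^q$ and $B(S,T)^q$ (using $\|\phi\|_{L^1}=1$). The Jacobians contribute $R^{2+n}$, while $|\eta_R'|^q=R^{-2q}|\eta'(\tau)|^q$ contributes the additional $R^{-2q}$ in the first term, and the explicit $R^{-2}$ in front of the second term combines with the $R^{(2+n)/q}$ from the measure. After taking $1/q$-th roots, both terms carry the same prefactor $R^{-2+(2+n)/q}=R^s$, giving
\begin{equation*}
\int|u||\partial_t\psi_R|\,dxdt\leq R^s I_R(S,T)^{1/p}A(S,T),\qquad\int|u||\Delta\psi_R|\,dxdt\leq\mu R^s I_R(0,T)^{1/p}B(S,T),
\end{equation*}
which together yield (\ref{3.4}). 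The one subtlety to be careful with is tracking the different supports: the time-derivative term only sees $[SR^2,TR^2)$ and produces $I_R(S,T)^{1/p}$, whereas the Laplacian acts throughout $[0,TR^2)$ and produces $I_R(0,T)^{1/p}$ — this asymmetry is precisely what makes the bootstrap argument in the subsequent lemmas possible, and is the only nonroutine accounting step in the proof.
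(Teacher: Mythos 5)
Your proposal is correct and follows essentially the same route as the paper: take the real part of the weak identity from Lemma \ref{Lemma 3.1}, bound by $\int |u|(|\partial_t\psi_R|+|\Delta\psi_R|)\,dxdt$, apply H\"older with exponents $p,q$ together with the bound $|\Delta\phi|\leq\mu\phi$, and rescale to extract the factor $R^{s}$. Your accounting of the supports (the time-derivative term living on $[SR^2,TR^2)$ and yielding $I_R(S,T)^{1/p}$, versus $I_R(0,T)^{1/p}$ for the Laplacian term) matches the paper's decomposition into $K_R^1$ and $K_R^2$ exactly.
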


\begin{proof}
Since $u$ is $L^{2}$-solution on $\left[ 0,T_{\ast }\right) $ and $%
TR^{2}<T_{\ast },$ by Lemma \ref{3.1}, we have%
\begin{eqnarray}
\lefteqn{\lambda \int_{\left[ 0,TR^{2}\right) \times \mathbb{R}%
^{n}}\left\vert u\right\vert ^{p}\psi _{R}dxdt+i\varepsilon \int_{\mathbb{R}%
^{n}}f\left( x\right) \psi _{R}\left( 0,x\right) dx}  \notag \\
&=&\int_{\left[ 0,TR^{2}\right) \times \mathbb{R}^{n}}u(-i\partial
_{t}\left( \psi _{R}\right) +\Delta \left( \psi _{R}\right) )dxdt.
\label{3.5}
\end{eqnarray}%
Note that $\lambda _{1}>0,$ by taking real part as the above identity, we
obtain%
\begin{eqnarray}
\lambda _{1}I_{R}\left( 0,T\right) +J_{R} &=&\int_{\left[ 0,TR^{2}\right)
\times \mathbb{R}^{n}}\func{Re}u\left( -i\partial _{t}\left( \psi
_{R}\right) +\Delta \left( \psi _{R}\right) \right) dxdt  \notag \\
&\leq &\int_{\left[ 0,TR^{2}\right) \times \mathbb{R}^{n}}\left\vert
u\right\vert \left\{ \left\vert \partial _{t}\left( \psi _{R}\right)
\right\vert +\left\vert \Delta \left( \psi _{R}\right) \right\vert \right\}
dxdt  \notag \\
&\equiv &K_{R}^{1}+K_{R}^{2}  \label{3.6}
\end{eqnarray}%
We note that $\left( \partial _{t}\eta \right) \left( t\right) =0$ except on 
$\left( S,T\right) .$ By using the identity 
\begin{equation*}
\partial _{t}\psi _{R}\left( t,x\right) =R^{-2}\phi _{R}\left( x\right)
\left( \partial _{t}\eta \right) \left( t/R^{2}\right)
\end{equation*}%
and the H\"{o}lder inequality, we can get%
\begin{eqnarray}
K_{R}^{1} &=&R^{-2}\int_{\left[ SR^{2},TR^{2}\right) \times \mathbb{R}%
^{n}}\left\vert u\right\vert \eta _{R}^{1/p}\left\vert \left( \partial
_{t}\eta \right) \left( t/R^{2}\right) \right\vert \eta _{R}^{-1/p}\phi
_{R}dxdt  \notag \\
&\leq &R^{-2}I_{R}\left( S,T\right) ^{1/p}\left( \int_{\left[
SR^{2},TR^{2}\right) \times \mathbb{R}^{n}}\left\vert \left( \partial
_{t}\eta \right) \left( t/R^{2}\right) \right\vert ^{q}\eta _{R}^{-1/\left(
p-1\right) }\phi _{R}dxdt\right) ^{1/q}  \notag \\
&=&I_{R}\left( S,T\right) ^{1/p}A\left( S,T\right) R^{s},  \label{3.7}
\end{eqnarray}%
where we have used the changing variables with $t/R^{2}=t^{\prime }$ and $%
x/R=x^{\prime }$ to obtain the last identity. Next, by the identity $\Delta
\left( \phi \left( x/R\right) \right) =R^{-2}\left( \Delta \phi \right)
\left( x/R\right) ,$ the H\"{o}lder inequality and the estimate (\ref{2.1}),
we have%
\begin{eqnarray}
K_{R}^{2} &=&R^{-2}\int_{\left[ 0,TR^{2}\right) \times \mathbb{R}%
^{n}}\left\vert u\right\vert \eta \left( t/R^{2}\right) \left\vert \left(
\Delta \phi \right) \left( x/R\right) \right\vert dxdt  \notag \\
&\leq &\mu R^{-2}\int_{\left[ 0,TR^{2}\right) \times \mathbb{R}%
^{n}}\left\vert u\right\vert \psi _{R}dxdt  \notag \\
&\leq &\mu R^{-2}I_{R}\left( 0,T\right) ^{1/p}\left( \int_{\left[
0,TR^{2}\right) \times \mathbb{R}^{n}}\psi _{R}dxdt\right) ^{1/q}  \notag \\
&=&\mu I_{R}\left( 0,T\right) ^{1/p}B\left( S,T\right) R^{s},  \label{3.8}
\end{eqnarray}%
where we have used the changing variables again. By combining the estimates (%
\ref{3.6})-(\ref{3.8}), we have the conclusion.
\end{proof}

\begin{remark}
\label{Remark 3.1} We remark the other cases different from $\lambda _{1}>0.$
For example, when $\lambda _{2}>0,$ by taking the imaginary part as (\ref%
{3.5}), an estimate similar to (\ref{3.2}) can be obtained.
\end{remark}

Next, we give the upper bound of $J_{R}.$ Let $\sigma >0$ and $0<\omega <1.$
We introduce the function%
\begin{equation}
\Psi \left( \sigma ,\omega \right) \equiv \max_{x\geq 0}\left( \sigma
x^{\omega }-x\right) =\left( 1-\omega \right) \omega ^{\frac{\omega }{%
1-\omega }}\sigma ^{\frac{1}{1-\omega }}.  \label{2.9a}
\end{equation}%
We also denote $I_{R}\left( T\right) =I_{R}\left( 0,T\right) ,$ $A\left(
T\right) =A\left( 0,T\right) ,$ $B\left( T\right) =B\left( 0,T\right) $ and%
\begin{equation*}
D\left( T\right) =A\left( T\right) +\mu B\left( T\right) ,
\end{equation*}%
for simplicity. The following estimates are valid:

\begin{lemma}
\label{Lemma 3.3} Let $u$ be an $L^{2}$-solution of (\ref{eq11})-(\ref{eq12}%
) on $\left[ 0,T_{\ast }\right) .$ Then the estimate%
\begin{equation}
J_{R}\leq \lambda _{1}\Psi \left( D\left( T\right) R^{s}/\lambda
_{1},1/p\right)  \label{2.10}
\end{equation}%
holds for any $T>0,R>0$ with $TR^{2}<T_{\ast },$ where $s=-2+\left(
2+n\right) /q.$ Moreover, if $T_{\ast }=\infty ,$ that is $u$ is a global
solution, then the inequality is valid:%
\begin{equation}
\limsup_{R\rightarrow \infty }R^{-sq}J_{R}\leq \left( \mu /\lambda
_{1}\right) ^{1/\left( p-1\right) }.  \label{2.11}
\end{equation}
\end{lemma}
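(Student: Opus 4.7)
The plan is to view Lemma~\ref{Lemma3.2} at $S=0$ as a one-variable inequality $\lambda_1 x + J_R \leq R^s D(T) x^{1/p}$ in $x := I_R(T) \geq 0$, extract (\ref{2.10}) as a one-variable Young-type inequality, then, under $T_\ast = \infty$, pass $R \to \infty$ at fixed $T$ and optimize the resulting constant over $T$.

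For (\ref{2.10}), setting $S = 0$ in (\ref{3.4}) collapses $I_R(S,T)$ and $I_R(0,T)$ into a single $I_R(T)$, so the bracket on the right becomes $D(T) I_R(T)^{1/p}$. Rearranging gives $J_R \leq R^s D(T) x^{1/p} - \lambda_1 x$ for the specific value $x = I_R(T)$, which I would then bound by the supremum over all $x \geq 0$. By the elementary identity $\max_{x \geq 0}(\sigma x^{1/p} - c x) = c\, \Psi(\sigma/c,\,1/p)$ obtained by the rescaling $x \mapsto x/c$ in the definition (\ref{2.9a}), this supremum is exactly $\lambda_1 \Psi(D(T)R^s/\lambda_1,\,1/p)$, which is (\ref{2.10}).

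For (\ref{2.11}), $T_\ast = \infty$ makes (\ref{2.10}) valid for every $R,T>0$. Substituting the closed form $\Psi(\sigma,1/p) = (1/q)(1/p)^{1/(p-1)} \sigma^q$ from (\ref{2.9a}) and simplifying with $1-q = -1/(p-1)$ yields
\[
R^{-sq} J_R \leq (1/q)(1/p)^{1/(p-1)}\, \lambda_1^{-1/(p-1)}\, D(T)^q,
\]
independent of $R$, so $\limsup_{R\to\infty}$ inherits the same bound. The conclusion then reduces to controlling $\min_{T>0} D(T)^q$ by a constant multiple of $\mu^{1/(p-1)}$. The key observation from (\ref{3.2a}) is that the exponents of $A(T) \sim T^{-1/p}$ and $\mu B(T) \sim \mu\, T^{1/q}$ sum to zero (since $-1/p + 1/q = 0$), so the product $A(T) \cdot \mu B(T)$ is a $T$-independent multiple of $\mu$; weighted AM--GM applied to $D(T) = A(T) + \mu B(T)$ then gives $\min_T D(T) \lesssim \mu^{1/p}$, hence $\min_T D(T)^q \lesssim \mu^{1/(p-1)}$. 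Inserting this into the displayed bound delivers $(\mu/\lambda_1)^{1/(p-1)}$ times an explicit $p$-dependent constant. The main hurdle I anticipate is not the analytic content but the bookkeeping of this constant: arranging it to equal $1$ on the nose in (\ref{2.11}) presumably requires also optimizing in the cutoff exponent $\theta$ entering $A(T)$, $B(T)$ via (\ref{3.2a}), or absorbing scaling factors into the normalization of $\phi$. Beyond this, no new input beyond (\ref{3.4}) and (\ref{3.2a}) is needed.
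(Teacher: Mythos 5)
Your proof follows the paper's argument essentially step for step: take $S=0$ in (\ref{3.4}), use the elementary identity $\max_{x\geq 0}\left( \sigma x^{1/p}-\lambda _{1}x\right) =\lambda _{1}\Psi \left( \sigma /\lambda _{1},1/p\right) $ to get (\ref{2.10}), then insert the closed form of $\Psi $, minimize $D\left( T\right) $ over $T$, and let $\theta \rightarrow \infty $ --- which is indeed exactly how the paper makes the constant in (\ref{2.11}) equal to $1$, confirming your guess about the role of the cutoff exponent. One point needs repair, though: the parenthetical claim that $-1/p+1/q=0$ is false. Since $1/p+1/q=1$, one has $-1/p+1/q=1-2/p$, so the product $A\left( T\right) \cdot \mu B\left( T\right) \propto \mu \,T^{1/q-1/p}$ is \emph{not} $T$-independent unless $p=2$; and the constant-product form of AM--GM you appeal to ($x+y\geq 2\sqrt{xy}$) would then produce $\min _{T}D\left( T\right) \lesssim \mu ^{1/2}$ rather than the needed $\mu ^{1/p}$. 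The step is rescued by carrying out the minimization honestly (equivalently, weighted AM--GM with weight $1/q$ on the $A$-term and $1/p$ on the $\mu B$-term, which are the weights that do kill the $T$-dependence): as in (\ref{2.13a}), $\min _{T>0}D\left( T\right) =p\left( p-1\right) ^{-1/q}a_{p}^{1/q}b_{p}^{1/p}$, and since the factor $\mu $ sits only in $b_{p}$, which enters with exponent $1/p$, this is $\lesssim \mu ^{1/p}$; letting $\theta \rightarrow \infty $ gives exactly $\mu ^{1/p}p\left( p-1\right) ^{-1/q}$, and $C_{1}$ times its $q$-th power collapses to $\left( \mu /\lambda _{1}\right) ^{1/\left( p-1\right) }$ on the nose. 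With that correction your argument coincides with the paper's.
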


The proof of this lemma was based on that of Theorem 3.3 in \cite{HJK03} and
Theorem 2.2 in \cite{FS10}.

\begin{proof}
Since $u$ is an $L^{2}$-solution on $\left[ 0,T_{\ast }\right) ,$ by using (%
\ref{3.4}) with $S=0,$ we obtain%
\begin{equation*}
J_{R}\leq R^{s}D\left( T\right) I_{R}\left( T\right) ^{1/p}-\lambda
_{1}I_{R}\left( T\right) \leq \lambda _{1}\Psi \left( D\left( T\right)
R^{s}/\lambda _{1},1/p\right) ,
\end{equation*}%
which is exactly (\ref{2.10}).

Next, we will prove (\ref{2.11}) under the assumption $T_{\ast }=\infty .$
By (\ref{2.9a}) and (\ref{2.10}), we have%
\begin{eqnarray}
J_{R} &\leq &\lambda _{1}\Psi \left( D\left( T\right) R^{s}/\lambda
_{1},1/p\right)  \notag \\
&=&\lambda _{1}\left( 1-1/p\right) \left( 1/p\right) ^{\frac{1/p}{1-1/p}%
}\left\{ D\left( T\right) R^{s}/\lambda _{1}\right\} ^{\frac{1}{1-1/p}} 
\notag \\
&=&C_{1}R^{sq}D\left( T\right) ^{q},  \label{2.12}
\end{eqnarray}%
for any $T>0,R>0,$ where $C_{1}=\lambda _{1}^{-1/\left( p-1\right) }\left(
p-1\right) \left( 1/p\right) ^{q}.$ This inequality implies%
\begin{equation}
\limsup_{R\rightarrow \infty }R^{-sq}J_{R}\leq C_{1}\left\{
\inf_{T>0}D\left( T\right) \right\} ^{q}.  \label{2.13}
\end{equation}%
Next, we will estimate $D\left( T\right) .$ Set%
\begin{equation*}
a_{p}=\frac{\theta }{\left\{ \theta -1/\left( p-1\right) \right\} ^{1/q}},%
\text{ \ }b_{p}=\frac{\mu }{\left( \theta +1\right) ^{1/q}}.
\end{equation*}%
Remembering the identities (\ref{3.2a}), we can rewrite $D\left( T\right) $
as%
\begin{equation}
D\left( T\right) =a_{p}T^{-1/p}+b_{p}T^{1/q}.  \label{2.13b}
\end{equation}%
Since%
\begin{eqnarray}
\min_{T>0}D\left( T\right) &=&p\left( p-1\right)
^{-1/q}a_{p}^{1/q}b_{p}^{1/p}  \notag \\
&=&\frac{\mu ^{1/p}p\left( p-1\right) ^{-1/q}\theta ^{1/q}}{\left\{ \theta
-1/\left( p-1\right) \right\} ^{1/q^{2}}\left( 1+\theta \right) ^{1/\left(
pq\right) }},  \label{2.13a}
\end{eqnarray}%
we have%
\begin{equation}
\lim_{\theta \rightarrow \infty }\min_{T>0}D\left( T\right) =\mu
^{1/p}p\left( p-1\right) ^{-1/q}.  \label{2.14}
\end{equation}%
Finally, by combining (\ref{2.13})-(\ref{2.14}), we obtain (\ref{2.11}),
which completes the proof of the lemma.
\end{proof}

\section{\protect\bigskip \label{S3} Upper bound of lifespan}

In this section, we give a proof of Theorem \ref{Theorem 2}, which implies
an upper bound of the lifespan for the local $L^{2}$-solution. We also
consider the case of $\lambda _{1}>0$ only. The other cases can be treated
in the almost same manner. When $\lambda _{1}>0,$ we may assume that $f_{2}$
satisfies

\begin{equation}
\text{\textquotedblleft }f_{1}\in L^{1},\text{ }\lambda _{2}f_{1}\left(
x\right) \geq \left\vert x\right\vert ^{-k},\text{ }\left\vert x\right\vert
>1\text{\textquotedblright\ or \textquotedblleft }f_{2}\in L^{1},\text{ }%
-\lambda _{1}f_{2}\left( x\right) \geq \left\vert x\right\vert ^{-k},\text{ }%
\left\vert x\right\vert >1\text{\textquotedblright }  \label{3.1a}
\end{equation}%
where $n<k<2/\left( p-1\right) .$

\begin{proof}
First, we note that by Corollary \ref{Cor 2.1}, there exists $\varepsilon
_{0}>0$ such that $T_{\varepsilon }>1$ for any $\varepsilon \in \left(
0,\varepsilon _{0}\right) .$ Moreover, since $1<p<1+2/n$ and $f$ satisfies (%
\ref{eq24}), by Proposition \ref{Prop22}, we also find $T_{\varepsilon
}<\infty .$

Next, we consider the lower bound of $J_{R}.$ By changing variables and (\ref%
{3.1a}), we have%
\begin{eqnarray*}
J_{R} &=&\varepsilon R^{n}\int_{\mathbb{R}^{n}}-f_{2}\left( Rx\right) \phi
\left( x\right) dx \\
&\geq &\varepsilon R^{n}\int_{\left\vert x\right\vert \geq 1/R}-f_{2}\left(
Rx\right) \phi \left( x\right) dx \\
&\geq &\varepsilon R^{n-k}\int_{\left\vert x\right\vert \geq 1/R}\left\vert
x\right\vert ^{-k}\phi \left( x\right) dx \\
&\geq &\varepsilon R^{n-k}\int_{\left\vert x\right\vert \geq
1/R_{0}}\left\vert x\right\vert ^{-k}\phi \left( x\right)
dx=C_{k}\varepsilon R^{n-k}.
\end{eqnarray*}%
for any $R>R_{0}>0,$ where $R_{0}$ is a constant independent of $%
R,\varepsilon $ and defined later.

Next, let $\tau \in \left( 1,T_{\varepsilon }\right) $ and $R>R_{0}.$ By
using (\ref{2.12}) with $T=\tau R^{-2},$ we have%
\begin{equation}
\varepsilon \leq C_{k}^{-1}C_{1}\left\{ R^{s}D\left( \tau R^{-2}\right)
\right\} ^{q}R^{-n+k}\equiv C_{2}H\left( \tau ,R\right) ,  \label{3.2}
\end{equation}%
where $C_{2}=C_{k}^{-1}C_{1}.$ By (\ref{2.13b}), we can rewrite $H$ as%
\begin{equation}
H\left( \tau ,R\right) =R^{-n+k}\left\{ D\left( \tau R^{-2}\right)
R^{s}\right\} ^{q}=\left\{ a_{p}\tau ^{-1/p}R^{\alpha _{1}}+b_{p}\tau
^{1/q}R^{-\alpha _{2}}\right\} ^{q},  \label{3.3a}
\end{equation}%
where $\alpha _{1}=k/q,$ $\alpha _{2}=2-k/q.$

Now we derive some properties on $H\left( \tau ,R\right) .$ We assume that
we can find a function $G\left( \tau \right) $ satisfying the following two
properties: The first one is that for any $\tau \in \left( 1,T_{\varepsilon
}\right) $ and any $R>R_{0},$ $H\left( \tau ,R\right) \geq G\left( \tau
\right) $ and the other one is that for any $\tau \in \left(
1,T_{\varepsilon }\right) ,$ there exists $R_{\tau }>R_{0}$ such that $%
H\left( \tau ,R_{\tau }\right) =G\left( \tau \right) .$ Then (\ref{3.2})
holds for any $\tau \in \left( 1,T_{\varepsilon }\right) ,$ $R>R_{0}$ if and
only if%
\begin{equation}
\varepsilon \leq C_{2}G\left( \tau \right) ,  \label{3.3}
\end{equation}%
for any $\tau \in \left( 1,T_{\varepsilon }\right) .$ Actually, we can find
such function $G\left( \tau \right) $ as follows. Set $y=R^{\alpha
_{1}+\alpha _{2}}=R^{2},$\ $\beta _{1}=\alpha _{2}/\left( \alpha _{1}+\alpha
_{2}\right) =\alpha _{2}/2$ and $h\left( \tau ,y\right) =a_{p}y^{1-\beta
_{1}}+b_{p}y^{-\beta _{1}}\tau .$ Then we can rewrite%
\begin{equation*}
H\left( \tau ,R\right) =\tau ^{-1/\left( p-1\right) }h\left( \tau ,y\right)
^{q}.
\end{equation*}%
Denote $\sigma =\sigma \left( y\right) =a_{p}b_{p}^{-1}\left( 1-\beta
_{1}\right) \beta _{1}^{-1}y,$ $g\left( \tau \right) =\left\{
a_{p}y^{1-\beta _{1}}\sigma ^{\beta _{1}-1}+b_{p}y^{-\beta _{1}}\sigma
^{\beta _{1}}\right\} \tau ^{1-\beta _{1}}$ and%
\begin{equation*}
G\left( \tau \right) =\tau ^{-1/\left( p-1\right) }g\left( \tau \right) ^{q}.
\end{equation*}%
It is easy to check $0<\beta _{1}<1.$ Then $\zeta =g\left( \tau \right) $ is
a convex. Furthermore, $\zeta =h\left( \tau ,y\right) $ is a tangent line of 
$\zeta =g\left( \tau \right) $ at the point of $\left( \sigma ,g\left(
\sigma \right) \right) .$ Therefore, we can see that $h\left( \tau ,y\right)
\geq g\left( \tau \right) ,$ for all $\tau >0.$ Hence $H\left( \tau
,R\right) \geq G\left( \tau \right) ,$ for any $\tau ,R>0.$ Here, we choose $%
R_{0}$ as $0<R_{0}<\left\{ \sigma ^{-1}\left( 1\right) \right\} ^{1/2}.$
Then for any $\tau \in \left( 1,T_{\varepsilon }\right) ,$ if we set $%
R_{\tau }=\left\{ \sigma ^{-1}\left( \tau \right) \right\} ^{1/2},$ that is,%
\begin{equation*}
R_{\tau }=\left\{ a_{p}^{-1}b_{p}\beta _{1}\left( 1-\beta _{1}\right)
^{-1}\tau \right\} ^{1/2}\left( >R_{0}\right) ,
\end{equation*}%
we have $H\left( \tau ,R_{\tau }\right) =G\left( \tau \right) .$ On the
other hand, by the direct computation, we have%
\begin{equation}
G\left( \tau \right) =C_{3}\tau ^{\kappa },  \label{3.4a}
\end{equation}%
where $\kappa =k/2-1/\left( p-1\right) $ and $C_{3}=C_{3}\left( \theta
,p\right) >0$ is constant dependent only on $\theta ,p.$ By combining (\ref%
{3.3}) and (\ref{3.4a}), we have $\varepsilon \leq C_{4}\tau ^{\kappa },$
with $C_{4}=C_{2}C_{3}>0.$ From the assumption $k<2/\left( p-1\right) ,$ we
obtain $\kappa <0.$ Therefore, by (\ref{3.4a}), we can get%
\begin{equation*}
\tau \leq C\varepsilon ^{1/\kappa }
\end{equation*}%
for any $\tau \in \left( 1,T_{\varepsilon }\right) ,$ with some $C>0.$
Finally, we can get $T_{\varepsilon }\leq C\varepsilon ^{1/\kappa },$ which
completes the proof of the theorem.
\end{proof}

\section{\label{S4} Appendix}

In this Appendix, we give a proof of Proposition \ref{Prop22}, which was
already proved in \cite{MIYWpre}, though the following arguement is
different. We consider the case of $\lambda _{1}>0$ only. In this case, we
may assume that%
\begin{equation*}
f_{2}\in L^{1},\ \int_{\mathbb{R}^{n}}f_{2}\left( x\right) dx<0.
\end{equation*}

\begin{proof}
We use a contradiction argument. We assume that $T_{\varepsilon }=\infty .$
Then we note that there exists a unique global $L^{2}$-solution $u$ for (\ref%
{eq11})-(\ref{eq12}). By the assumption on $f_{2},$ we can see that $J_{R}$
is positive for sufficiently large $R>0.$ In fact, due to $f_{2}\in L^{1},$
by Lebesgue's convergence theorem, we have%
\begin{equation*}
\lim_{R\rightarrow \infty }J_{R}=\varepsilon \int_{\mathbb{R}%
^{n}}-f_{2}\left( x\right) dx>0.
\end{equation*}%
Thus by (\ref{3.4}), we obtain 
\begin{equation*}
I_{R}\left( 0,T\right) \leq CI_{R}\left( 0,T\right) ^{1/p}R^{s},
\end{equation*}%
for any sufficiently large $R,$ with some positive constant $C$ independent
of $R,$ which implies that%
\begin{equation}
I_{R}\left( 0,T\right) \leq CR^{qs}\leq C,  \label{3.9}
\end{equation}%
for any large $R,$ due to $qs\leq 0,$ $\left( \text{i.e. }1<p\leq
1+2/n\right) .$ Therefore, by monotone convergence theorem and letting $%
R\rightarrow \infty $ in (\ref{3.9}), we have 
\begin{equation*}
\,\int_{\left[ 0,\infty \right) \times \mathbb{R}^{n}}\left\vert
u\right\vert ^{p}dxdt<\infty .
\end{equation*}%
In the case of $s<0,$ that is $1<p<1+2/n,$ letting $R$ tend to infinity in (%
\ref{3.9}), we obtain%
\begin{equation*}
\int_{\left[ 0,\infty \right) \times \mathbb{R}^{n}}\left\vert u\right\vert
^{p}dxdt=0.
\end{equation*}%
Hence $u=0$ for a.e $\left( t,x\right) \in \left[ 0,\infty \right) \times 
\mathbb{R}^{n}.$ Finally, letting $R\rightarrow \infty $ in (\ref{3.4}), we
get $\int_{\mathbb{R}^{n}}-f_{2}\left( x\right) dx\leq 0,$ which contradics
to the assumption on $f.$

Next, we consider the critical case $s=0,$ i.e. $p=1+2/n.$ Remembering (\ref%
{3.2a}), if we choose small $S$ and large $\theta $ with $T-S$ bounded, we
obtain%
\begin{equation}
B\left( S,T\right) \leq \left( \int_{\mathbb{R}^{n}}-f_{2}\left( x\right)
dx\right) /\left\{ 2\mu \left( \int_{\left[ 0,\infty \right) \times \mathbb{R%
}^{n}}\left\vert u\right\vert ^{p}dxdt\right) ^{1/p}\right\} .  \label{2.8}
\end{equation}%
By the uniform boundedness (\ref{3.9}) of $I_{R},$ keeping $T-S$ bounded, we
have%
\begin{equation}
\lim_{R\rightarrow \infty }I_{R}\left( S,T\right) ^{1/p}A\left( S,T\right)
=0.  \label{2.9}
\end{equation}%
Finally, letting $R\rightarrow \infty $ in (\ref{3.4}), we obtain%
\begin{equation*}
\lambda _{1}\int_{\left[ 0,\infty \right) \times \mathbb{R}^{n}}\left\vert
u\right\vert ^{p}dxdt+\frac{1}{2}\int_{\mathbb{R}^{n}}-f_{2}\left( x\right)
dx\leq 0,
\end{equation*}%
which also contradicts to the assumption on $\lambda _{1},f_{2}.$ This
completes the proof of the proposition.
\end{proof}

\emph{Acknowledgments}. The author would like to express deep gratitude to
Professor Yoshio Tsutsumi for their many useful sugesstions, comments and
constant encouragement. The author would also like to thank Mr. Kiyotaka
Suzaki and Mr. Yuta Wakasugi for reading our paper carefully and pointing
out some mistakes.

\end{document}